\theoremstyle{plain}
\declaretheorem[style = plain, numberwithin = section]{theorem}
\declaretheorem[style = plain,   sibling = theorem]{corollary}
\declaretheorem[style = plain,   sibling = theorem]{lemma}
\declaretheorem[style = plain,   sibling = theorem]{proposition}
\theoremstyle{definition}
\declaretheorem[style = definition, sibling = theorem]{definition}
\theoremstyle{remark}
\title{Comparison of motives with rational coefficients}
\author{Bo Zhang}
\date{}
\begin{document}
\maketitle

\begin{abstract}
The theory of rational motives admits several models, including those of Morel, Beilinson, Ayoub, and Voevodsky. An open question has been the equivalence of Voevodsky's Nisnevich-based $\mathrm{DM}(S, \mathbb{Q})$ with the others, which was only known over excellent and geometrically unibranch base schemes.

In this paper, we prove that $\mathrm{DM}(S, \mathbb{Q})$ is equivalent to Morel/Beilinson/Ayoub's rational motives over any quasi-excellent base scheme $S$.

Our main technical result is a stable motivic equivalence between the plus part of free $\mathbb{Q}$-linear spectrum $\mathbb{Q}[\mathbb{S}]$ and the motivic rational Eilenberg MacLane spectrum $\mathbf{H}\mathbb{Q}$. This equivalence is established whenever Ayoub's motives $\mathrm{DA}(S, \mathbb{Q})$ satisfies h-descent.

As a byproduct, we partially confirm Voevodsky's conjecture that the formation of motivic rational Eilenberg Maclane spectrum $\mathbf{H}\mathbb{Q}$ is stable under base change between any quasi-excellent scheme.
\end{abstract}

\tableofcontents
\newpage

\section{Introduction}
The theory of motives, as originally envisioned by Grothendieck and later realized in various forms by Voevodsky, Morel, Beilinson, Ayoub, and others, provides a powerful and universal framework for studying algebraic varieties. A significant simplification of this theory occurs when one inverts the exponential characteristic, leading to the study of rational motives.

However, even within this rational setting, a fundamental question remains: the precise relationship between several coexisting, a priori different, models. The main variants include:
\begin{enumerate}
    \item \textbf{Morel's motives}, defined from the plus part of the rational stable motivic homotopy category $\mathrm{SH}(S)_\mathbb{Q}$.
    \item \textbf{Beilinson's motives}, defined as the category of modules over Beilinson's motivic ring spectrum $\mathrm{H}_B$, which is a direct summand of $\mathrm{KGL}_\mathbb{Q}$.
    \item \textbf{Ayoub's motives} $\mathrm{DA}(S, \mathbb{Q})$, constructed via $\mathbb{P}^1$-stabilization of $\mathbb{A}^1$-local \'etale $\infty$-sheaves of connective chain complexes of $\mathbb{Q}$-vector spaces.
    \item \textbf{Voevodsky's motives} $\mathrm{DM}(S, \mathbb{Q})$ and $\mathbf{H}\mathbb{Q}$-modules, classically defined via Nisnevich sheaves with transfers.
\end{enumerate}

It is well-established that the first three models are equivalent. However, the equivalence with Voevodsky's original construction using \emph{Nisnevich sheaves with transfers} is only known to hold over a restricted class of base schemes $S$, namely those that are excellent and geometrically unibranch (see \cite[Theorem 16.1.4]{cisinski2019triangulated}).

The central objective of this paper is to move forward along this direction. We prove that the category of modules over the rational motivic Eilenberg MacLane spectrum $\mathbf{H}\mathbb{Q}$ is equivalent to Morel/Beilinson/Ayoub's rational motives over $S$, whenever $\mathrm{DA}(S, \mathbb{Q})$ satisfies h-descent. Hence for quasi-excellent schemes, we remove the condition of being geometrically unibranch and prove that Voevodsky's rational motives are equivalent to the other variants over any quasi-excellent scheme $S$. In particular, our result applies to any algebraic variety.

\subsection*{Main Results}
Our proof proceeds by establishing a new fundamental equivalence between two key motivic ring spectra. We introduce the \textbf{free $\mathbb{Q}$-linear $T$-spectrum} $\mathbb{Q}[\mathbb{S}]$, built from the motivic sphere spectrum $\mathbb{S}$, and compare its plus part to the \textbf{motivic rational Eilenberg MacLane spectrum} $\mathbf{H}\mathbb{Q}$, which is constructed using the sheaf of relative zero cycles. We define a canonical ring spectrum homomorphism:
$$
\Phi_S: \mathbb{Q}[\mathbb{S}]_+ \to \mathbf{H}\mathbb{Q}.
$$

Our main technical result establishes that the equivalence of these spectra is governed by the descent properties of Ayoub's motives.

\begin{theorem}[Theorem ~\ref{thm:phi-s-equivalence-h-descent}]
Let $S$ be a locally Noetherian scheme. If $\mathrm{DA}(S, \mathbb{Q})$ satisfies h-descent, then $\Phi_S$ is a stable motivic equivalence.
\end{theorem}

Since it is known that Ayoub's motives satisfy h-descent for quasi-excellent schemes (see \cite{cisinski2019triangulated}), we obtain the following unconditional result:

\begin{theorem}[Corollary \ref{cor:phi-s-main-result}]
Let $S$ be a quasi-excellent scheme. The canonical morphism $\Phi_S$ is a stable motivic equivalence.
\end{theorem}

Once this equivalence is established, the desired comparison of motives follows. Our main theorem is the following:

\begin{theorem}[Corollary \ref{cor:main-result-equivalence} and theorem ~\ref{thm:DM-DA-equivalence}]
Let $S$ be a quasi-excellent scheme. Then $\mathrm{DM}(S, \mathbb{Q})$ and $\mathbf{H}\mathbb{Q}$-modules over $S$ are both equivalent to Beilinson's motives, Ayoub's motives, and Morel's motives over $S$.
\end{theorem}

As a byproduct, we partially confirm rational variant of Voevodsky's conjecture \cite[Conjecture 17]{voevodsky2002open} that the formation of the motivic rational Eilenberg MacLane spectrum $\mathbf{H}\mathbb{Q}$ is stable under base change, between quasi-excellent schemes.

\begin{theorem}[Corollary \ref{cor:voevodsky-conjecture}]
The formation of $\mathbf{H}\mathbb{Q}$ is stable under base change between any quasi-excellent schemes.
\end{theorem}

\subsection*{Strategy of the Proof}
The proof that $\Phi_S$ is a stable motivic equivalence relies on reducing the problem from the stable homotopy category to a comparison of Ayoub's motives.

First, we apply \textbf{Theorem \ref{thm:motivic-levelwise-equivalence}}, which uses layer filtration to reduce the comparison of these spectra to a comparison of reduced Ayoub motives of level spaces. Specifically, the problem reduces to comparing the Ayoub motive of the free sheaf $\mathbb{Q}[X]$ with that of the sheaf of rational relative zero cycles $L(X)_\mathbb{Q}$.

The final step rests on the observation that $L(X)_\mathbb{Q}$ is the h-sheafification of $\mathbb{Q}[X]$. Consequently, the map $\mathbb{Q}[X] \to L(X)_\mathbb{Q}$ induces an isomorphism in the category of Ayoub's motives $\mathrm{DA}(S, \mathbb{Q})$ whenever this category satisfies h-descent.

\subsection*{Organization}
This paper is structured as follows. In \textbf{Section \ref{sec:preliminaries}}, we recall the foundational definitions of the unstable and stable motivic homotopy categories $\mathcal{H}(S)$ and $\mathrm{SH}(S)$, as well as the precise construction of $\mathbf{H}\mathbb{Q}$. In \textbf{Section \ref{sec:comparison}}, we introduce $\mathbb{Q}[\mathbb{S}]_+$ and execute the reduction strategy outlined above to prove the main theorem. Finally, in \textbf{Subsection \ref{subsec:applications}}, we apply this result to deduce the equivalence between $\mathbf{H}\mathbb{Q}$-modules and Morel/Beilinson/Ayoub's rational motives.

\section{Preliminaries}\label{sec:preliminaries}
\subsection{Motivic spaces over a scheme}
Let $S$ be a scheme. Let $\mathrm{Sm}_S$ denote the category of smooth schemes over $S$ of finite type. Let $\mathrm{PSh}(\mathrm{Sm}_S)$ denote the category of $\infty$-presheaves on $\mathrm{Sm}_S$. Localizing with Nisnevich topology, we obtain the infinity topos $\mathrm{Shv}(\mathrm{Sm}_S)$ of Nisnevich $\infty$-sheaves over $S$. Since $\mathrm{Shv}(\mathrm{Sm}_S)$ is a topos with enough points, the localization of $\mathrm{PSh}(\mathrm{Sm}_S)$ can be described as follows. Let $f : X \to Y$ be a morphism of $\infty$-presheaves on $\mathrm{Sm}_S$, we say it is a (local) equivalence if for any point $x$, $f_x : X_x \to Y_x$ is an equivalence of Kan complexes. 

The category of motivic spaces $\mathcal{H}(S)$ over $S$ is the reflective localization of $\mathrm{Shv}(\mathrm{Sm}_S)$ at the set of projections $X \times_S \mathbb{A}^1 \to X$ for all smooth schemes $X$ over $S$ of finite type. We denote the motivic localization functor by $L_{\mathbb{A}^1}(-)$.

\subsection{Stable motivic homotopy category over a scheme}
Let $T = S^{2, 1}$ be the Tate sphere. Stabilizing $\mathcal{H}(S)$ at $T$, we obtain the stable motivic homotopy category $\mathrm{SH}(S)$ over $S$. 

Let $\mathcal{H}_\bullet(S)$ be the $\infty$-category of pointed motivic spaces, that is, the under category $\mathcal{H}(S)_{*/}$ where $*$ is the terminal object. We shall first define the category of motivic $T$-spectra $\mathbf{Spt}_{\mathbb{A}^1}(S)$ over $S$. A motivic $T$-spectrum is a countable family of pointed motivic spaces $\{E^n\}_{n\in \mathbb{N}}$ together with bonding maps 
$$
T \wedge E^n \to E^{n+1}.
$$

A morphism $f : E \to F$ of motivic $T$-spectra is a family of morphisms between pointed motivic spaces $E^n \to F^n$ for $n \in \mathbb{N}$ which commutes with bonding maps. This is the category of motivic $T$-spectra with strict motivic equivalence $\mathbf{Spt}_{\mathbb{A}^1}(S)$. Here a strict motivic equivalence is a morphism $f : E \to F$ between motivic $T$-spectra such that $f^n : E^n \to F^n$ are all motivic equivalences.

Note that we can also construct motivic $T$-spectra starting from Nisnevich $\infty$-sheaves over $S$, which is more useful in practice. We start with defining the category of $T$-spectra $\mathbf{Spt}(S)$ over $S$. A $T$-spectrum is a countable family of pointed Nisnevich $\infty$-sheaves $\{E^n\}_{n\in \mathbb{N}}$ together with bonding maps 
$$
T \wedge E^n \to E^{n+1}.
$$

A morphism $f : E \to F$ of $T$-spectra is a family of morphisms between pointed Nisnevich $\infty$-sheaves $E^n \to F^n$ for $n \in \mathbb{N}$ which commutes with bonding maps. This is the category of $T$-spectra with strict equivalence $\mathbf{Spt}(S)$. Here a strict equivalence is a morphism $f : E \to F$ between $T$-spectra such that $f^n : E^n \to F^n$ are equivalences. Then $\mathbf{Spt}_{\mathbb{A}^1}(S)$ is equivalent to the reflective localization of $\mathbf{Spt}(S)$ at strict motivic equivalences.

The stable motivic homotopy category $\mathrm{SH}(S)$ over $S$ is the reflective localization of $\mathbf{Spt}_{\mathbb{A}^1}(S)$ at the set of stabilization morphisms 
$$
\mathbb{S} \wedge T[-n-1] \to \mathbb{S}[-n]
$$
where $\mathbb{S}$ is the motivic sphere spectrum.

\subsection{Motivic rational Eilenberg Maclane spectrum}
Let $S$ be a Noetherian scheme and $X$ be a smooth scheme over $S$ of finite type. We denote $L(X)_\mathbb{Q}$ be the sheaf of relative zero cycles on $X$ over $S$ with rational coefficients, see Suslin-Voevodsky's work ~\cite{suslin2000relative}.

We can extend $L$ to the category of Nisnevich $\infty$-sheaves on $\mathrm{Sm}_S$ by Kan extension. For a Nisnevich $\infty$-sheaf $X$, define
$$
L(X)_\mathbb{Q} = \operatorname*{colim}_{U \to X} L(U)_\mathbb{Q}.
$$

We also have a pointed version. Let $(X, x_0)$ be a pointed Nisnevich $\infty$-sheaf, we define
$$
L_*(X, x_0)_\mathbb{Q} = \mathrm{cofib}(L(x_0)_\mathbb{Q} \to L(X)_\mathbb{Q}).
$$

Now the motivic rational Eilenberg Maclane spectrum is defined by 
$$
\mathbf{H}\mathbb{Q}^n = L_*(S^{2n, n})_\mathbb{Q}
$$
where $S^{p, q} = S^{p-q} \wedge \mathbb{G}_m^{\wedge q}$ is the motivic sphere.

\section{Comparison of rational motives}\label{sec:comparison}
For rational motives, there are four main variants, namely Morel's motives as plus part of rational stable motivic homotopy category, Beilinson's motives as modules over Beilinson's motivic ring spectrum $H_B$, which is a direct summand of rational algebraic K-theory spectrum $\mathrm{KGL}_\mathbb{Q}$. Ayoub's motives is $\mathbb{P}^1$-stabilization of $\mathbb{A}^1$-local etale $\infty$-sheaves on $\mathrm{Sm}_S$ taking values in connective chain complexes of $\mathbb{Q}$-vector spaces. Voevodsky's motives is defined similarly to Ayoub's motives but using Nisnevich sheaves with transfers. It is known that the first three are all equivalent over any base scheme $S$. Moreover, over an excellent and geometrically unibranch scheme $S$, Voevodsky's rational motives also agree with others. 

\subsection{Free R-linear T-spectra}
Let $R$ be a commutative ring and $E$ be a $T$-spectrum. We shall define the free $R$-linear $T$-spectrum $R[E]$ as follows. Let $X$ be a Nisnevich $\infty$-sheaf, the free Nisnevich $\infty$-sheaf of $R$-modules $R[X]$ is the sheafification of $\infty$-presheaf $R[X](U) = R[X(U)]$. Let $(X, x_0)$ be a pointed Nisnevich $\infty$-sheaf, the free Nisnevich $\infty$-sheaf of $R$-modules is defined by 
$$
\tilde{R}[(X, x_0)] = \mathrm{cofib}(R[x_0] \to R[X]).
$$
Let $E$ be a $T$-spectrum, we define the free $R$-linear $T$-spectrum $R[E]$ by $R[E]^n = \tilde{R}[E^n]$. Since $\tilde{R}[-] : \mathrm{Shv}_\bullet(\mathrm{Sm}_S) \to \mathrm{Mod}(R)(\mathrm{Sm}_S)$ is a monoidal left adjoint, the bonding maps of $R[E]$ are defined naturally; therefore, mapping $E$ to $R[E]$ is also functorial. Let $R[\mathbb{S}]$ be the free $R$-linear $T$-spectrum of the motivic sphere spectrum $\mathbb{S}$, then it is a motivic ring spectrum as $\mathbb{S}$ is. For any $R[E]$ it naturally equips with an $R[\mathbb{S}]$ action; therefore, it is an $R[\mathbb{S}]$-module.

We can similarly define free $R$-linearization of $S^1$-spectra, and we have the following result.
\begin{proposition}\label{prop:hr-smash}
Let $S$ be a scheme and $E$ an $S^1$-spectrum over $S$. Let $R$ be a commutative ring. Then we have canonical stable equivalence $E \wedge \mathrm{H}R \simeq R[E]$.
\end{proposition}

\begin{proof}
We have canonical homomorphism $E \wedge \mathrm{H}R \to R[E]$. Note that the stable homotopy sheaf of $E \wedge \mathrm{H}R$ is
$$
\pi^s_n(E \wedge \mathrm{H}R) = H_n(E, R).
$$
The stable homotopy sheaf of $R[E]$ is 
$$
\pi^s_n(R[E]) = \operatorname{colim}_i \tilde{H}_{n+i}(E^i, R).
$$

They are canonically isomorphic because for any spectrum $E$ we have layer filtration
$$
\begin{tikzcd}
{\operatorname{colim}_i (\Sigma^\infty E^i)[-i]} \arrow[r, "\simeq"] & E
\end{tikzcd}.
$$
\end{proof}

\subsection{Rationalization of infinite loop spaces}
Let $S$ be a scheme. An infinite loop space over $S$ is an infinite loop space object in the category of Nisnevich $\infty$-sheaves on $\mathrm{Sm}_S$. Let $M$ be an infinite loop space over $S$ and $E$ the corresponding connective $S^1$-spectrum (so $M \simeq \Omega^\infty E$), then the rationalization of $M$ is defined by
$$
M\otimes \mathbb{Q} \simeq \Omega^\infty\bigl(E\wedge \mathrm{H}\mathbb{Q}\bigr),
$$
where $\mathrm{H}\mathbb{Q}$ is the ordinary rational Eilenberg MacLane $S^1$-spectrum.

\begin{definition}
Let $S$ be a scheme and $M$ an infinite loop space over $S$ and $E$ the corresponding connective $S^1$ spectrum. The motivic rationalization of $M$ is defined by 
$$
M \operatorname{\otimes}^{\mathbb{A}^1} \mathbb{Q} = \operatorname{colim}_i \Omega^i L_{\mathbb{A}^1}(E \wedge \mathrm{H}\mathbb{Q})^i
$$
\end{definition}

\begin{proposition}\label{prop:motivic-rationalization}
Let $M$ be an infinite loop space over $S$, then the motivic rationalization of $M$ is unique up to motivic equivalence. Furthermore, we have canonical motivic equivalence
$$
M \otimes^{\mathbb{A}^1} \mathbb{Q} = \operatorname{colim}_i \Omega^i \tilde{\mathbb{Q}}_{\mathbb{A}^1}[\mathbf{B}^iM]
$$
where $\mathbf{B}^i M$ is the ith delooping infinite loop space of $M$ and $\tilde{\mathbb{Q}}_{\mathbb{A}^1}[X] = L_{\mathbb{A}^1} \tilde{\mathbb{Q}}[X]$.
\end{proposition}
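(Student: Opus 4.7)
The plan is to match the two sides by applying Proposition~\ref{prop:HR-smash} level-wise, and then to deduce uniqueness from the functoriality of the construction in the underlying $S^1$-spectrum. Let $E$ denote the connective $S^1$-spectrum with $\Omega^\infty E \simeq M$. By definition of $\tilde{\mathbb{Q}}[-]$ in Section~3.1, the $i$-th level of the $S^1$-spectrum $\mathbb{Q}[E]$ is $\tilde{\mathbb{Q}}[E^i]$, and since $E$ is connective with underlying infinite loop space $M$, the space $E^i$ is naturally identified with the $i$-th delooping $\mathbf{B}^i M$. Hence Proposition~\ref{prop:HR-smash} (with $R = \mathbb{Q}$) supplies a stable equivalence of $S^1$-spectra $E\wedge \mathrm{H}\mathbb{Q}\simeq \mathbb{Q}[E]$ whose $i$-th level is $\tilde{\mathbb{Q}}[\mathbf{B}^iM]$.

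Next I would apply $\Omega^i L_{\mathbb{A}^1}(-)$ level-wise and pass to the sequential colimit. The functor $\operatorname{colim}_i\Omega^i L_{\mathbb{A}^1}(-)^i$ is the motivic analogue of the infinite loop space functor: it takes an $S^1$-spectrum, replaces it level-wise by its motivic-local (injective) fibrant replacement, and then forms the spectrification colimit along the adjoint bonding maps $L_{\mathbb{A}^1}X^i\to \Omega L_{\mathbb{A}^1}X^{i+1}$. Because $L_{\mathbb{A}^1}$ preserves Nisnevich-local weak equivalences and $\Omega$ commutes with filtered colimits of pointed motivic spaces, this functor converts level-wise equivalences and, more generally, stable equivalences of connective $S^1$-spectra into motivic equivalences. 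Applying it to the equivalence $E\wedge \mathrm{H}\mathbb{Q}\simeq \mathbb{Q}[E]$ produces the desired formula
$$
M\otimes^{\mathbb{A}^1}\mathbb{Q} \;\simeq\; \operatorname*{colim}_i \Omega^i L_{\mathbb{A}^1}\tilde{\mathbb{Q}}[\mathbf{B}^iM].
$$

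For uniqueness, the association $M\mapsto E$ from a grouplike $E_\infty$-object in Nisnevich $\infty$-sheaves to its connective $S^1$-spectrum is an equivalence of $\infty$-categories (the sheafwise recognition principle), so any two connective spectra deloopings of $M$ are related by a stable equivalence. Smashing with $\mathrm{H}\mathbb{Q}$ preserves stable equivalences, and by the argument above so does $\operatorname{colim}_i\Omega^i L_{\mathbb{A}^1}(-)^i$; hence $M\otimes^{\mathbb{A}^1}\mathbb{Q}$ is independent of the choice of $E$ up to canonical motivic equivalence.

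The main obstacle I anticipate is the step that the spectrification colimit $\operatorname{colim}_i\Omega^i L_{\mathbb{A}^1}(-)^i$ descends to the stable homotopy category of $S^1$-spectra with values in $\mathcal{H}_*(S)$. The delicate point is the interaction between $L_{\mathbb{A}^1}$ and the Bousfield--Friedlander stabilization: one must know that $\mathbb{A}^1$-localization of $S^1$-spectra is compatible, level-wise, with looping, so that the system $\{\Omega^i L_{\mathbb{A}^1}(-)^i\}_i$ really computes $\Omega^\infty$ in the motivic stable category. I would handle this by invoking that $L_{\mathbb{A}^1}$ is a symmetric monoidal left Bousfield localization and that $\Omega = \mathrm{Map}_*(S^1,-)$ preserves motivic equivalences between motivic-fibrant spaces, which together imply the colimit is a model for the motivic $\Omega^\infty$ and therefore homotopy invariant on stable equivalence classes.
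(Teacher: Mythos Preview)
Your proposal is correct and follows essentially the same approach as the paper: apply Proposition~\ref{prop:HR-smash} to identify $E\wedge\mathrm{H}\mathbb{Q}$ with $\mathbb{Q}[E]$ level-wise, pass to the motivic spectrification colimit, and deduce uniqueness from the stable equivalence of any two connective deloopings of $M$. The paper is slightly more explicit in that it singles out the canonical $\Omega$-spectrum model $\mathrm{H}M$ with $(\mathrm{H}M)^i=\mathbf{B}^iM$, which is what justifies your identification $E^i\simeq\mathbf{B}^iM$.
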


\begin{proof}
Since stable equivalence is stable motivic equivalence, fo any connective spectrum $E$ and $F$ whose infinite loop space is $M$, we have stable motivic equivalence of $S^1$-spectra $E \wedge \mathrm{H}\mathbb{Q} \simeq F \wedge \mathrm{H}\mathbb{Q}$, therefore we have motivic equivalence
$$
\operatorname{colim}_i \Omega^i L_{\mathbb{A}^1}(E\wedge \mathrm{H}\mathbb{Q})^i \simeq \operatorname{colim}_i \Omega^i L_{\mathbb{A}^1}(F \wedge \mathrm{H}\mathbb{Q})^i
$$

Since $\mathrm{H}M^i = \mathbf{B}^iM$ forms a canonical spectrum whose infinite loop space is $M$, substitute $E$ by $\mathrm{H}M$, by proposition ~\ref{prop:hr-smash} we obtain
$$
M \otimes^{\mathbb{A}^1} \mathbb{Q} = \operatorname{colim}_i \Omega^i L_{\mathbb{A}^1} \tilde{\mathbb{Q}}[\mathbf{B}^iM].
$$
\end{proof}

For the class of infinite loop spaces we care in this paper, rationalization and motivic rationalization agree.
\begin{proposition}\label{prop:sym-x-rationalization}
Let $M$ be a sheaf of abelian groups on $\mathrm{Sm}_S$, then we have motivic equivalence $K(M, n) \otimes \mathbb{Q} \simeq K(M, n) \otimes^{\mathbb{A}^1} \mathbb{Q}$.
\end{proposition}

\begin{proof}
Since the infinite loop space of $\mathrm{H}M[n]$ is $K(M, n)$ and $\mathrm{H}M[n] \wedge \mathrm{H}\mathbb{Q} \simeq \mathrm{H}(M \otimes \mathbb{Q})[n]$, we have 
$$
K(M, n) \otimes^{\mathbb{A}^1} \mathbb{Q} = \operatorname{colim}_i \Omega^i L_{\mathbb{A}^1}K(M\otimes \mathbb{Q}, n+i),
$$
it is enough to show that the adjoint bonding map 
$$
L_{\mathbb{A}^1}K(M\otimes \mathbb{Q}, i) \to \Omega L_{\mathbb{A}^1}K(M\otimes \mathbb{Q}, i+1)
$$
is a motivic equivalence. Denote $M \otimes \mathbb{Q}$ by $M_\mathbb{Q}$,  according to $\mathbb{A}^1$-Dold Kan, it is equivalent to show the equivalence between $\mathbb{A}^1$-homology sheaves 
$$
H_n^{\mathbb{A}^1}(M_\mathbb{Q}[i]) \cong H_{n+1}^{\mathbb{A}^1}(M_\mathbb{Q}[i+1])
$$
This is true because abelian $\mathbb{A}^1$-localization functor commutes with shifts, hence 
$$
H_n^{\mathbb{A}^1}(M_\mathbb{Q}[i]) = H_n(L_{\mathbb{A}^1}(M_\mathbb{Q}[i])) \cong H_n(L_{\mathbb{A}^1}(M_\mathbb{Q})[i]) \cong H_{n-i}^{\mathbb{A}^1}(M_\mathbb{Q}).
$$
\end{proof}

\begin{proposition}\label{prop:loopspace-rationalization}
Let $X$ be a pointed Nisnevich $\infty$-sheaf over $S$. Let $M$ be the infinite loop space $\Omega^\infty (\Sigma^\infty X \wedge \mathrm{H}\mathbb{Z})$ over $S$, then we have motivic equivalence $M \otimes \mathbb{Q} \simeq M \otimes^{\mathbb{A}^1} \mathbb{Q}$.
\end{proposition}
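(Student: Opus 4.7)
The plan is to reduce the statement to the abelian-group case treated in Proposition \ref{prop:symX-rationalization} via Proposition \ref{prop:HR-smash}. Taking the associated connective $S^1$-spectrum $E = \Sigma^\infty X_+$, Proposition \ref{prop:HR-smash} identifies $E \wedge \mathrm{H}\mathbb{Q}$ with $\mathbb{Q}[E]$, whose $n$th level is $\tilde{\mathbb{Q}}[S^n \wedge X_+]$. Checking stalk-wise, each stalk of $X$ is a set, so by classical Dold-Thom the simplicial $\mathbb{Q}$-module $\tilde{\mathbb{Q}}[S^n \wedge X_+]_x$ is weakly equivalent to $K(\mathbb{Q}[X_x], n)$ and the bonding maps are stalk-wise weak equivalences. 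Thus $E \wedge \mathrm{H}\mathbb{Q}$ is a Nisnevich $\Omega$-spectrum stably equivalent to the Eilenberg-MacLane spectrum $\mathrm{H}(\mathbb{Q}[X])$ of the sheaf of abelian groups $\mathbb{Q}[X]$, and passing to $\Omega^\infty$ gives
$$M \otimes \mathbb{Q} \simeq \mathbb{Q}[X] \quad \text{as Nisnevich sheaves of abelian groups}.$$

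For the motivic side, using the same spectrum model together with Proposition \ref{prop:motivic-rationalization} and the identification above,
$$M \otimes^{\mathbb{A}^1} \mathbb{Q} \simeq \operatorname{colim}_i \Omega^i L_{\mathbb{A}^1}(E \wedge \mathrm{H}\mathbb{Q})^i \simeq \operatorname{colim}_i \Omega^i L_{\mathbb{A}^1} K(\mathbb{Q}[X], i),$$
where the second equivalence uses that $L_{\mathbb{A}^1}$ preserves the Nisnevich equivalence $(E \wedge \mathrm{H}\mathbb{Q})^i \simeq K(\mathbb{Q}[X], i)$. I would then apply the argument in the proof of Proposition \ref{prop:symX-rationalization} verbatim to the sheaf of abelian groups $\mathbb{Q}[X]$: abelian $\mathbb{A}^1$-localization commutes with shifts via $\mathbb{A}^1$-Dold-Kan, so the adjoint bonding maps $L_{\mathbb{A}^1} K(\mathbb{Q}[X], i) \to \Omega L_{\mathbb{A}^1} K(\mathbb{Q}[X], i+1)$ are motivic equivalences, and the sequential colimit collapses to $L_{\mathbb{A}^1} \mathbb{Q}[X]$.

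Combining the two computations, the canonical comparison map $M \otimes \mathbb{Q} \to M \otimes^{\mathbb{A}^1} \mathbb{Q}$ is identified with the localization map $\mathbb{Q}[X] \to L_{\mathbb{A}^1} \mathbb{Q}[X]$, which is a motivic equivalence by definition. The main obstacle I anticipate is making the stalk-wise Dold-Thom identification $E \wedge \mathrm{H}\mathbb{Q} \simeq \mathrm{H}(\mathbb{Q}[X])$ compatible level-wise with the $\mathbb{A}^1$-localization appearing in Proposition \ref{prop:motivic-rationalization}'s formula, so that the whole spectrum remains a motivic $\Omega$-spectrum after localizing. Once this bookkeeping is in place the remainder is a direct application of Proposition \ref{prop:symX-rationalization}'s technique.
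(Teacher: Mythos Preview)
Your proposal is correct and follows essentially the same approach as the paper: both identify $M\otimes\mathbb{Q}\simeq\mathbb{Q}[X]$, rewrite the levels of $\Sigma^\infty X_+\wedge\mathrm{H}\mathbb{Q}$ as $\tilde{\mathbb{Q}}[S^i\wedge X_+]\simeq \mathbb{Q}[X][i]$, and then use that abelian $\mathbb{A}^1$-localization commutes with shifts to compare the two sides. The only cosmetic difference is that the paper finishes by directly computing $\pi_n^{\mathbb{A}^1}$ on both sides (obtaining $H_n^{\mathbb{A}^1}(X,\mathbb{Q})$ each time), whereas you phrase the same computation as ``the colimit collapses via the bonding-map argument of Proposition~\ref{prop:symX-rationalization}''; the obstacle you flag about compatibility with $L_{\mathbb{A}^1}$ is not an issue, since the level-wise identification is already a Nisnevich equivalence and hence preserved by $L_{\mathbb{A}^1}$, exactly as you note.
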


\begin{proof}
Note that we have $M \otimes \mathbb{Q} \simeq \tilde{\mathbb{Q}}[X]$ and 
\begin{align*}
M \otimes^{\mathbb{A}^1} \mathbb{Q} & = \operatorname{colim}_i \Omega^i L_{\mathbb{A}^1}(\Sigma^\infty X \wedge \mathrm{H}\mathbb{Q})^i \\ 
                    & \simeq \operatorname{colim}_i \Omega^i L_{\mathbb{A}^1} \tilde{\mathbb{Q}}[S^i \wedge X] \\ 
                    & \simeq \operatorname{colim}_i \Omega^i L_{\mathbb{A}^1} (\tilde{\mathbb{Q}}[X][i])
\end{align*}
Computing $\mathbb{A}^1$-homotopy sheaves on both objects, we have 
$$
\pi_n^{\mathbb{A}^1}(M \otimes \mathbb{Q}) \cong \tilde{H}_n^{\mathbb{A}^1}(X, \mathbb{Q})
$$
and 
\begin{align*}
\pi_n^{\mathbb{A}^1}(M \otimes^{\mathbb{A}^1} \mathbb{Q}) & \cong \operatorname{colim}_i \tilde{H}_{n+i-i}^{\mathbb{A}^1}(X, \mathbb{Q}) \\ 
                                                          & \cong \tilde{H}_n^{\mathbb{A}^1}(X, \mathbb{Q}).
\end{align*}
We are done.
\end{proof}

\subsection{Rationalization of T-spectra}
Let $T = S^{2, 1}$ be the Tate sphere. As we discussed, for any T-spectra $E$, $\mathbb{Q}[E]$ is naturally a $\mathbb{Q}[\mathbb{S}]$-module. As we shall see, it is equivalent to an Ayoub's motive in Nisnevich topology.

\begin{proposition}\label{prop:rational-sphere-as-free-Q-algebra}
Let $\mathbb{S}_\mathbb{Q}$ be the rational motivic sphere spectrum, then $\mathbb{Q}[\mathbb{S}]$ is canonically equivalent to $\mathbb{S}_\mathbb{Q}$ as motivic ring spectra. Moreover, the category of $\mathbb{Q}[\mathbb{S}]$-modules is canonically equivalent to Ayoub's motives in Nisnevich topology $\mathrm{DA}_\mathrm{Nis}(S, \mathbb{Q})$.
\end{proposition}

\begin{proof}
By ~\cite[5.3.25]{cisinski2019triangulated}, the right adjoint forgetful functor $\mathrm{DA}_\mathrm{Nis}(S, \mathbb{Q}) \to \mathrm{SH}(S)_\mathbb{Q}$ is an equivalence of monoidal stable $\infty$-categories. Since the unit object $\mathbb{Q}(0)$ in $\mathrm{DA}_\mathrm{Nis}(S, \mathbb{Q})$ is the suspension spectrum of $\mathbb{Q}[S]$, its image under forgetful functor is just $\mathbb{Q}[\mathbb{S}]$, so it is also a unit object in $\mathrm{SH}(S)_\mathbb{Q}$. Therefore we have stable motivic equivalence of motivic ring spectra $\mathbb{Q}[\mathbb{S}] \simeq \mathbb{S}_\mathbb{Q}$. 

It follows that $\mathbb{Q}[\mathbb{S}]$-modules is equivalent to $\mathrm{SH}(S)_\mathbb{Q}$, and by ~\cite[5.3.25]{cisinski2019triangulated} again, it is further equivalent to $\mathrm{DA}_\mathrm{Nis}(S, \mathbb{Q})$.
\end{proof}

In fact, $\mathbb{Q}[E]$ is canonically an Ayoub's motive in Nisnevich topology, as $\tilde{\mathbb{Q}}[E^i]$ is a Nisnevich $\infty$-sheaf of connective rational chain complexes and we have bonding maps $\tilde{\mathbb{Q}}[T] \otimes \tilde{\mathbb{Q}}[E^i] \to \tilde{\mathbb{Q}}[E^{i+1}]$. Therefore the functor $\mathbb{Q}[-]: \mathrm{SH}(S) \to \mathrm{DA}_{\mathrm{Nis}}(S, \mathbb{Q})$ is the left adjoint of the forgetful functor from Ayoub's motives in Nisnevich topology to stable motivic homotopy category.

It turns out that $\mathbb{Q}[E]$ and $E \wedge \mathbb{Q}[\mathbb{S}]$ are stably motivic equivalent, as rationalization of a $T$-spectrum.

\begin{theorem}\label{thm:QtensorE-vs-EwedgeQS}
Let $E$ be a $T$-spectrum, then $\mathbb{Q}[E]$ and $E \wedge \mathbb{Q}[\mathbb{S}]$ are canonically stably motivic equivalent.
\end{theorem}

\begin{proof}
We have a diagram of left adjoint functors
$$
\begin{tikzcd}
\mathrm{SH}(S) \arrow[r, "{(-)\wedge\mathbb{Q}[\mathbb{S}]}"] \arrow[rd, "{\mathbb{Q}[-]}"'] & {\mathrm{Mod}(\mathbb{Q}[\mathbb{S}])(S)} \arrow[d, "L"] \\
                                                                                             & {\mathrm{DA}_{\mathrm{Nis}}(S, \mathbb{Q})}             
\end{tikzcd}.
$$

Since their right adjoint functors form a commutative diagram, this diagram is also commutative. Therefore $L(E\wedge \mathbb{Q}[\mathbb{S}]) \simeq \mathbb{Q}[E]$. 

Let $U : \mathrm{DA}_{\mathrm{Nis}}(S, \mathbb{Q}) \to  \mathrm{Mod}(\mathbb{Q}[\mathbb{S}])(S)$ be the forgetful functor, which is the right adjoint of $L$. By proposition ~\ref{prop:rational-sphere-as-free-Q-algebra}, the unit $id \Rightarrow UL(-)$ is a natural equivalence. Therefore 
$$
\mathbb{Q}[E] \simeq U(\mathbb{Q}[E]) \simeq UL(E \wedge \mathbb{Q}[\mathbb{S}]) \simeq E \wedge \mathbb{Q}[\mathbb{S}]
$$
\end{proof}

\begin{definition}
Let $E$ be a $T$-spectrum, then the rationalization of $E$ is defined by $\mathbb{Q}[E]$ or $E \wedge \mathbb{Q}[\mathbb{S}]$.
\end{definition}

When $T$-spectrum $E$ is constructed from infinite loop spaces, there is another candidate of rationalization. Note that levels of motivic $\Omega$-spectrum are always infinite loop spaces, and adjoint bonding maps of it are always homomorphisms of infinite loop spaces, so we can always apply spectrification to obtain $T$-spectrum satisfies this condition.

\begin{proposition}\label{prop:levelwise-rationalization-T-spectrum}
Let $E$ be a $T$-spectrum such that $E^i$ are infinite loop spaces and adjoint bonding maps $E^i \to \Omega_T E^{i+1}$ are homomorphisms of infinite loop spaces. Then level-wise rationalization $(E \otimes \mathbb{Q})^i = E^i \otimes \mathbb{Q}$ forms a $T$-spectrum.
\end{proposition}

\begin{proof}
It remains to construct bonding maps. Since for an infinite loop space $M$ we have 
$$
M \otimes \mathbb{Q} = \operatorname{colim}_i \Omega^i \tilde{\mathbb{Q}}[\mathbf{B}^i M],
$$
we reduce to construct compatible morphisms $\tilde{\mathbb{Q}}[\mathbf{B}^jE^i] \to \Omega_T \tilde{\mathbb{Q}}[\mathbf{B}^j E^{i+1}]$. Therefore it is enough to prove that level-wise delooping $\mathbf{B}^j E$ forms a $T$-spectrum. By induction on $j$, we only need to show that $\mathbf{B}E$ forms a $T$-spectrum where adjoint bonding maps $\mathbf{B}E^i \to \Omega_T \mathbf{B}E^{i+1}$ are homomorphisms of infinite loop spaces. This is proven in the following proposition.
\end{proof}

\begin{proposition}\label{prop:levelwise-delooping-T-spectrum}
Let $E$ be a $T$-spectrum such that $E^i$ are infinite loop spaces and adjoint bonding maps $E^i \to \Omega_T E^{i+1}$ are homomorphisms of infinite loop spaces. Then level-wise delooping $\mathbf{B}E$ forms a $T$-spectrum where adjoint bonding maps $\mathbf{B}E^i \to \Omega_T \mathbf{B}E^{i+1}$ are homomorphisms of infinite loop spaces.
\end{proposition}

\begin{proof}
Since $T = S^1 \wedge \mathbb{G}_m$, a homomorphism $\mathbf{B}E^i \to \Omega_T\mathbf{B}E^{i+1}$ is equivalent to $\mathbf{B}E^i \to \Omega_{\mathbb{G}_m} E^{i+1}$. Since $\mathbf{B}(-)$ is left adjoint of $\Omega(-)$ on infinite loop spaces, it is enough to have homomorphism $E^i \to \Omega (\Omega_{\mathbb{G}_m}E^{i+1}) \cong \Omega_T E^{i+1}$, which is true by assumption.
\end{proof}

\begin{lemma}\label{lem:fake-S1-suspension-equivalence}
Let $E$ be a  $T$-spectrum such that $E^i$ are infinite loop spaces and adjoint bonding maps $E^i \to \Omega_T E^{i+1}$ are homomorphisms of infinite loop spaces. Let $S^1 \wedge E$ be the fake $S^1$-suspension of $E$, then the canonical morphism $S^1 \wedge E \to \mathbf{B}E$ is a stable equivalence.
\end{lemma}

\begin{proof}
First we need to prove evaluation maps $S^1 \wedge E^i \to \mathbf{B}E^i$ form a morphism of $T$-spectra. In other words, the following diagram 
$$
\begin{tikzcd}
S^1 \wedge \mathbb{G}_m \wedge S^1 \wedge E^i \arrow[r] \arrow[d] & S^1 \wedge \mathbb{G}_m \wedge \mathbf{B} E^i \arrow[d] \\
S^1 \wedge E^{i+1} \arrow[r]                                      & \mathbf{B} E^{i+1}                                     
\end{tikzcd}
$$
is commutative.

By adjunction, it is equivalent to show the following diagram is commutative: 
$$
\begin{tikzcd}
\mathbb{G}_m \wedge S^1 \wedge E^i \arrow[r, "1 \wedge ev"] \arrow[d, "\tau"] & \mathbb{G}_m \wedge \mathbf{B} E^i \arrow[d, "\sigma"] \\
E^{i+1} \arrow[r, "id"]                           & E^{i+1}                                               
\end{tikzcd}.
$$

By adjunction, $\sigma \circ (1 \wedge ev)$ is just adjoint bonding map $\tau : \mathbb{G}_m \wedge S^1 \wedge E^i \to E^{i+1}$. We are done.

As for an infinite loop space $M$ we have $M \simeq \Omega \mathbf{B}M$, computing stable homotopy sheaves we obtain $\pi_{p, q}^s(\mathbf{B} E) \cong \pi_{p-1, q}^s(E)$. It follows that $\mathbf{B}E$ is canonically stably equivalent to $E[1]$, which is equivalent to $S^1 \wedge E$.
\end{proof}

\begin{corollary}
Let $E$ be a $T$-spectrum such that $E^i$ are infinite loop spaces and adjoint bonding maps $E^i \to \Omega_T E^{i+1}$ are homomorphisms of infinite loop spaces. Then $\mathbf{B}^j E$ is canonically stable equivalent to $E[j] = E \wedge S^j$.
\end{corollary}

\begin{proof}
By lemma ~\ref{lem:fake-S1-suspension-equivalence} and the fact that fake and real $S^1$-suspension agree up to stable equivalence, this is true for $j = 1$. By induction assume $E \wedge S^n \simeq \mathbf{B}^n E$, then 
$$
E \wedge S^{n+1} \simeq (E \wedge S^n) \wedge S^1 \simeq \mathbf{B}^n E \wedge S^1 \simeq \mathbf{B}^{n+1} E.
$$
\end{proof}

Now we are ready to prove that level-wise rationalization is equivalent to rationalization.

\begin{theorem}\label{thm:rationalization-equivalence}
Let $E$ be a $T$-spectrum such that $E^i$ are infinite loop spaces and adjoint bonding maps $E^i \to \Omega_T E^{i+1}$ are homomorphisms of infinite loop spaces. Then the canonical morphism $\mathbb{Q}[E] \to E \otimes \mathbb{Q}$ is a stable equivalence.
\end{theorem}

\begin{proof}
Denote $Q(E)$ for the spectrification of $T$-spectrum $E$. Then 
\begin{align*}
Q^n(E \otimes \mathbb{Q}) & \simeq \operatorname{colim}_i \Omega^i_T (E \otimes \mathbb{Q})^{n+i} \\ 
                          & \simeq \operatorname{colim}_i \Omega^i_T (\operatorname{colim}_j \Omega^j \tilde{\mathbb{Q}}[\mathbf{B}^j E ^{n+i}]) \\ 
                          & \simeq \operatorname{colim}_j \Omega^j (\operatorname{colim}_i \Omega^i_T \tilde{\mathbb{Q}}[\mathbf{B}^j E^{n+i}]) \\ 
                          & \simeq \operatorname{colim}_j \Omega^j Q^n(\mathbb{Q}[\mathbf{B}^j E]) \\ 
                          & \simeq \operatorname{colim}_j \Omega^j Q^n(\mathbb{Q}[E][j]) \\ 
                          & \simeq Q^n(\mathbb{Q}[E]).
\end{align*}
We are done.
\end{proof}

Similarly we can use level-wise motivic rationalization to rationalize a $T$-spectrum constructed from infinite loop spaces.
\begin{proposition}
Let $E$ be a $T$-spectrum such that $E^i$ are infinite loop spaces and adjoint bonding maps $E^i \to \Omega_T E^{i+1}$ are homomorphisms of infinite loop spaces. Then level-wise motivic rationalization $(E \otimes^{\mathbb{A}^1} \mathbb{Q})^i = E^i \otimes^{\mathbb{A}^1} \mathbb{Q}$ forms a motivic $T$-spectrum.
\end{proposition}

\begin{proof}
Similar to proposition ~\ref{prop:levelwise-rationalization-T-spectrum}.
\end{proof}

\begin{theorem}
Let $E$ be a $T$-spectrum such that $E^i$ are infinite loop spaces and adjoint bonding maps $E^i \to \Omega_T E^{i+1}$ are homomorphisms of infinite loop spaces. Then the canonical morphism $\mathbb{Q}[E] \to E \otimes^{\mathbb{A}^1} \mathbb{Q}$ is a stable motivic equivalence.
\end{theorem}

\begin{proof}
Denote $Q_{\mathbb{A}^1}(E)$ for the motivic spectrification of $T$-spectrum $E$. Then 
\begin{align*}
Q_{\mathbb{A}^1}^n(E \otimes^{\mathbb{A}^1} \mathbb{Q}) 
            & \simeq \operatorname{colim}_i \Omega^i_T L_{\mathbb{A}^1}(E \otimes^{\mathbb{A}^1} \mathbb{Q})^{n+i} \\ 
            & \simeq \operatorname{colim}_i \Omega^i_T (\operatorname{colim}_j \Omega^j \tilde{\mathbb{Q}}_{\mathbb{A}^1}[\mathbf{B}^j E ^{n+i}]) \\ 
            & \simeq \operatorname{colim}_j \Omega^j (\operatorname{colim}_i \Omega^i_T \tilde{\mathbb{Q}}_{\mathbb{A}^1}[\mathbf{B}^j E^{n+i}]) \\ 
            & \simeq \operatorname{colim}_j \Omega^j Q_{\mathbb{A}^1}^n(\mathbb{Q}[\mathbf{B}^j E]) \\ 
            & \simeq \operatorname{colim}_j \Omega^j Q_{\mathbb{A}^1}^n(\mathbb{Q}[E][j]) \\ 
            & \simeq Q_{\mathbb{A}^1}^n(\mathbb{Q}[E]).
\end{align*}
We are done.
\end{proof}

\subsection{Ayoub's motives and modules over plus part of Q[S]}
By proposition ~\ref{prop:rational-sphere-as-free-Q-algebra}, $\mathbb{Q}[\mathbb{S}]$ is equivalent to $\mathbb{S}_\mathbb{Q}$, therefore it also has a plus-minus decomposition $\mathbb{Q}[\mathbb{S}] \simeq \mathbb{Q}[\mathbb{S}]_+ \vee \mathbb{Q}[\mathbb{S}]_-$, see ~\cite[16.2.1]{cisinski2019triangulated}.

It turns out Ayoub's motives is equivalent to modules over $\mathbb{Q}[\mathbb{S}]_+$. 
\begin{proposition}\label{prop:alpha-equivalence}
Let $S$ be a scheme. Let $\alpha: \mathrm{Mod}(\mathbb{Q}[\mathbb{S}]_+)(S) \to \mathrm{DA}(S, \mathbb{Q})$ be the left adjoint functor induced from left adjoint $\mathrm{SH}(S) \to \mathrm{DA}(S, \mathbb{Q})$, then $\alpha$ is an equivalence of monoidal stable $\infty$-categories.
\end{proposition}

\begin{proof}
See ~\cite[Theorem 16.2.13 and theorem 16.2.18]{cisinski2019triangulated}.
\end{proof}

It follows that level-wise rational stable motivic equivalence of $T$-spectra is stable motivic equivalence after smash product with $\mathbb{Q}[\mathbb{S}]_+$.
\begin{theorem}\label{thm:motivic-levelwise-equivalence}
Let $f : E \to F$ is a morphism of $T$-spectra. Let $\widetilde{M} : \mathcal{H}_\bullet(S) \to \mathrm{DA}(S, \mathbb{Q})$ be the functor of associated reduced motives of pointed motivic spaces.  If level-wisely f induces equivalence of Ayoub's motives $\widetilde{M}(E^i) \simeq \widetilde{M}(F^i)$, then 
$$
f \wedge \mathbb{Q}[\mathbb{S}]_+ : E \wedge \mathbb{Q}[\mathbb{S}]_+ \to F \wedge \mathbb{Q}[\mathbb{S}]_+
$$
is a stable motivic equivalence.
\end{theorem}

\begin{proof}
By proposition ~\ref{prop:alpha-equivalence}, $\widetilde{M}(E^i)$ corresponds to the $T$-spectrum $\Sigma^\infty_T E^i \wedge \mathbb{Q}[\mathbb{S}]_+$, therefore by assumption, 
$$
\Sigma_T^\infty E^i \wedge \mathbb{Q}[\mathbb{S}]_+ \to \Sigma^\infty_T F^i \wedge \mathbb{Q}[\mathbb{S}]_+
$$
are stable motivic equivalences for all $i$. By layer filtration we have stable motivic equivalence
$$
\operatorname{colim}_i (\Sigma^\infty_T E^i \wedge \mathbb{Q}[\mathbb{S}]_+) (-i)[-2i] \to E\wedge \mathbb{Q}[\mathbb{S}]_+.
$$
which is functorial with respect to $f$, our result follows.
\end{proof}

\subsection{Comparison of motivic ring spectra}
Let $\mathbb{Q}[\mathbb{S}]$ be the free $\mathbb{Q}$-linear $T$-spectrum of motivic sphere spectrum $\mathbb{S}$, then we have a canonical ring spectra homomorphism 
$$
\phi_S: \mathbb{Q}[\mathbb{S}] \to \mathbf{H}\mathbb{Q}
$$
sending $\mathbb{Q}[S^{p, q}]$ to $L_*(S^{p, q})_\mathbb{Q}$.

Since $\mathbb{S}[\mathbb{Q}]_+ \simeq \mathbb{S}_{\mathbb{Q}_+}$ satisfies etale descent and etale localization of $\mathbb{S}[\mathbb{Q}]_- \simeq \mathbb{S}_{\mathbb{Q}_-}$ is zero, $\mathbb{Q}[\mathbb{S}]_+$ is the etale localization of $\mathbb{Q}[\mathbb{S}]$.

Note that construction of Voevodsky's rational motives $\mathrm{DM}(S, \mathbb{Q})$ is insensitive to the Nisnevich or etale topology, therefore $\mathbf{H}\mathbb{Q}$ satisfies etale descent. It follows that $\phi_S$ factor through $\mathbb{Q}[\mathbb{S}]_+$ and we have a canonical morphism of motivic ring spectra
$$
\Phi_S: \mathbb{Q}[\mathbb{S}]_+ \to \mathbf{H}\mathbb{Q}.
$$

We shall prove that $\Phi_S$ is a stable motivic equivalence, whenever $\mathrm{DA}(S, \mathbb{Q})$ satisfies h-descent.

\begin{definition}
Let $f : X \to Y$ be a morphism of Nisnevich $\infty$-sheaves over $S$. We say f is a rational stable motivic equivalence, if $f$ induces equivalent Ayoub's motives $M(X) \simeq M(Y)$. Let $(-)_\mathbb{Q}$ denotes the localization functor on Nisnevich $\infty$-sheaves over $S$, with respect to rational stable motivic equivalences.
\end{definition}

\begin{definition}
Let $M$ be an infinite loop space over $S$, we define the nth shift of $M$ as $M[n] = M \otimes K(\mathbb{Z}, n)$ where $\otimes$ denotes homotopy tensor product of infinite loop spaces.
\end{definition}

\begin{lemma}\label{lem:rational-sheaf-connective}
Let $M$ be a Nisnevich $\infty$-sheaf over $S$ valued in connective chain complexes. Then $M_\mathbb{Q}$ is also a Nisnevich $\infty$-sheaf over $S$ valued in connective chain complexes.
\end{lemma}

\begin{proof}
By assumption, $M$ is a $\mathbb{Z}$-module. Since $(-)_\mathbb{Q}$ commutes with finite homotopy products, $M_\mathbb{Q}$ is an $\mathbb{Z}_\mathbb{Q}$-module. Note that the localization morphism $\mathbb{Z} \to \mathbb{Z}_\mathbb{Q}$ is a ring homomorphism, by restriction of scalars $M_\mathbb{Q}$ is a $\mathbb{Z}$-module. We are done.
\end{proof}

\begin{lemma}\label{lem:infinite-loop-rational-shift}
Let $M$ be an infinite loop space over $S$, then there is natural equivalence $(M[n])_\mathbb{Q} \to (M_\mathbb{Q}[n])_\mathbb{Q}$.
\end{lemma}

\begin{proof}
The morphism $(M[n])_\mathbb{Q} \to (M_\mathbb{Q}[n])_\mathbb{Q}$ follows from the localization morphism $M \to M_\mathbb{Q}$. To construct another direction $(M_\mathbb{Q}[n])_\mathbb{Q} \to (M[n])_\mathbb{Q}$, it is enough to construct $M_\mathbb{Q}[n] \to (M[n])_\mathbb{Q}$. By lemma ~\ref{lem:rational-sheaf-connective}, $(M[n])_\mathbb{Q}$ is a $\mathbb{Z}$-module. Therefore by adjunction it is equivalent to a morphism $M_\mathbb{Q} \to \Omega^n(M[n])_\mathbb{Q}$. It follows from definition that $\Omega^nN$ is $(-)_\mathbb{Q}$-local provided that $N$ is $(-)_\mathbb{Q}$-local. Hence we reduce to construct $M \to \Omega^n(M[n])_\mathbb{Q}$. By adjunction again this is equivalent to $M[n] \to (M[n])_\mathbb{Q}$, which is localization morphism. 

It remains to show that they are inverse to each other. First we prove $(M[n])_\mathbb{Q} \to (M_\mathbb{Q}[n])_\mathbb{Q} \to (M[n])_\mathbb{Q}$ is identity. We can confirm it by proving that precompose with $\eta: M[n] \to (M[n])_\mathbb{Q}$ is $\eta$. This is true by the following commutative diagram 
$$
\begin{tikzcd}
{M[n]} \arrow[d, "\eta"] \arrow[r] & {M_\mathbb{Q}[n]} \arrow[d]              &                     \\
{(M[n])_\mathbb{Q}} \arrow[r]      & {(M_\mathbb{Q}[n])_\mathbb{Q}} \arrow[r] & {(M[n])_\mathbb{Q}}
\end{tikzcd}.
$$

Now we prove $(M_\mathbb{Q}[n])_\mathbb{Q} \to (M[n])_\mathbb{Q} \to (M_\mathbb{Q}[n])_\mathbb{Q}$ is identity. It is enough to prove that precomposing with $\eta': M_\mathbb{Q}[n] \to (M_\mathbb{Q}[n])_\mathbb{Q}$ is $\eta'$. By adjunction we reduce to show that $M_\mathbb{Q} \to \Omega^n(M[n])_\mathbb{Q} \to \Omega^n(M_\mathbb{Q}[n])_\mathbb{Q}$ is $M_\mathbb{Q} \to \Omega^n(M_\mathbb{Q}[n])_\mathbb{Q}$. Precomposing with $M \to M_\mathbb{Q}$ we further reduce to prove that the following diagram is commutative:
$$
\begin{tikzcd}
{M[n]} \arrow[r] \arrow[d]  & {M_\mathbb{Q}[n]} \arrow[d]    \\
{M[n]_\mathbb{Q}} \arrow[r] & {(M_\mathbb{Q}[n])_\mathbb{Q}}
\end{tikzcd}.
$$
This is true by naturality of $id \Rightarrow (-)_\mathbb{Q}$.
\end{proof}

\begin{corollary}\label{cor:rational-shift-equivalence}
Let $f : M \to N$ be a homomorphism of infinite loop spaces over $S$. Suppose $f$ is a rational stable motivic equivalence, then $f[n] : M[n] \to N[n]$ are rational stable motivic equivalences for all natural numbers $n$.
\end{corollary}

\begin{proof}
By lemma ~\ref{lem:infinite-loop-rational-shift}.
\end{proof}

\begin{proposition}\label{prop:equivalence-implies-stable-equivalence}
Let $S$ be a scheme. If the canonical morphisms $\eta : \mathbb{Q}[X] \to L(X)_\mathbb{Q}$ are rational stable motivic equivalences for all smooth quasi-projective schemes $X$ over $S$, then $\Phi_S$ is a stable motivic equivalence.
\end{proposition}

\begin{proof}
Applying $(-) \wedge \mathbb{Q}[\mathbb{S}]_+$ to $\phi_S$, we obtain the morphism 
$$
\Psi_S : \mathbb{Q}[\mathbb{S}] \wedge \mathbb{Q}[\mathbb{S}]_+ \to \mathbf{H}\mathbb{Q} \wedge \mathbb{Q}[\mathbb{S}]_+.
$$

By proposition ~\ref{prop:rational-sphere-as-free-Q-algebra}, smash product with $\mathbb{Q}[\mathbb{S}]$ is equivalent to smash product with $\mathbb{S}_\mathbb{Q}$, hence 
$$
\mathbb{Q}[\mathbb{S}] \wedge \mathbb{Q}[\mathbb{S}]_+ \simeq \mathbb{Q}[\mathbb{S}]_+
$$
as $\mathbb{Q}[\mathbb{S}]_+$ is already a rational $T$-spectrum. 

Since $\Phi_S$ is a morphism of motivic ring spectra, $\mathbf{H}\mathbb{Q}$ is a $\mathbb{Q}[\mathbb{S}]_+$-module, therefore 
$$
\mathbf{H}\mathbb{Q} \wedge \mathbb{Q}[\mathbb{S}]_+ \simeq \mathbf{H}\mathbb{Q}.
$$

Hence to prove $\Phi_S$ is a stable motivic equivalence, it is equivalent to show $\Psi_S$ is a stable motivic equivalence.

By theorem ~\ref{thm:motivic-levelwise-equivalence}, it is enough to show the equivalence of Ayoub's motives 
$$
\widetilde{M}(\tilde{\mathbb{Q}}[S^{2n, n}]) \simeq \widetilde{M}(L_*(S^{2n, n})_\mathbb{Q}).
$$ 

Note that for a pointed motivic space $X$ over $S$, $\widetilde{M}(X) \simeq \mathrm{cofib}(M(S) \to M(X))$, so it is enough to show that $\tilde{\mathbb{Q}}[S^{2n, n}] \to L_*(S^{2n, n})_\mathbb{Q}$ is a rational stable motivic equivalence.

Since 
$$
\tilde{\mathbb{Q}}[S^{2n, n}] \simeq \tilde{\mathbb{Q}}[\mathbb{G}_m^{\wedge n}] \otimes \tilde{\mathbb{Z}}[S^n] \simeq \tilde{\mathbb{Q}}[\mathbb{G}_m^{\wedge n}][n]
$$
and 
$$
L_*(S^{2n, n})_\mathbb{Q} \cong L_*(\mathbb{G}_m^{\wedge n})_\mathbb{Q} \otimes \tilde{\mathbb{Z}}[S^n] \simeq L_*(\mathbb{G}_m^{\wedge n})_\mathbb{Q}[n],
$$
by corollary ~\ref{cor:rational-shift-equivalence}, we reduce to prove that
$$
\tilde{\mathbb{Q}}[\mathbb{G}_m^{\wedge n}] \to L_*(\mathbb{G}_m^{\wedge n})_\mathbb{Q}
$$
is a rational stable motivic equivalence. As $\mathbb{G}_m^{\wedge n}$ is colimit of objects $\mathbb{G}_m^i$, we reduce to show that 
$$
\mathbb{Q}[\mathbb{G}_m^i] \to L(\mathbb{G}_m^i)_\mathbb{Q}
$$
are rational stable motivic equivalences for all $i$. This follows from our assumption.
\end{proof}

\begin{lemma}\label{lem:h-descent-ZX-LX}
Suppose $\mathrm{DA}(S, \mathbb{Q})$ satisfies h-descent, then $\mathbb{Q}[X] \to L(X)_\mathbb{Q}$ is a rational stable motivic equivalence for any smooth quasi-projective scheme $X$ over $S$.
\end{lemma}

\begin{proof}
By ~\cite[Proposition 3.2.5, Lemma 6.5.1, Theorem 7.1.1]{anderson2019chow}, $L(X)_\mathbb{Q}$ is the h-sheafification of $\mathbb{Q}[X]$. If $\mathrm{DA}(S, \mathbb{Q})$ satisfies h-descent, then Ayoub's motives of them are equivalent.
\end{proof}

\begin{theorem}\label{thm:phi-s-equivalence-h-descent}
Let $S$ be a locally Noetherian scheme. If $\mathrm{DA}(S, \mathbb{Q})$ satisfies h-descent, then $\Phi_S$ is a stable motivic equivalence.
\end{theorem}

\begin{proof}
By proposition ~\ref{prop:equivalence-implies-stable-equivalence} and lemma ~\ref{lem:h-descent-ZX-LX}.
\end{proof}

\begin{corollary}\label{cor:phi-s-main-result}
Let $S$ be a quasi-excellent scheme, then $\Phi_S$ is a stable motivic equivalence.
\end{corollary}

\begin{proof}
By ~\cite[Theorem 14.3.4]{cisinski2019triangulated}, $\mathrm{DA}(S, \mathbb{Q})$ satisfies h-descent for quasi-excellent scheme $S$.
\end{proof}

\subsection{Applications}\label{subsec:applications}
\begin{proposition}\label{prop:formation-stable-base-change}
Assume that $\Phi_-$ is a stable motivic equivalence for base schemes $S$ and $T$, then the formation of $\mathbf{H}\mathbb{Q}$ is stable under base change for any morphism between $S$ and $T$. 
\end{proposition}

\begin{proof}
Let $f : T \to S$ be a morphism of schemes. Then we have the following commutative diagram
$$
\begin{tikzcd}
{f^*\mathbb{Q}[\mathbb{S}]_+} \arrow[r, "f^*\Phi_S"] \arrow[d] & f^*\mathbf{H}\mathbb{Q} \arrow[d] \\
{\mathbb{Q}[\mathbb{S}]_+} \arrow[r, "\Phi_T"]                 & \mathbf{H}\mathbb{Q}.             
\end{tikzcd}
$$

The left vertical morphism is an isomorphism since $\mathbb{Q}[\mathbb{S}]_+ \simeq \mathbb{S}_{\mathbb{Q}_+}$. By assumption both horizontal morphisms are stable motivic equivalences, hence the right vertical morphism is also a stable motivic equivalence.
\end{proof}

We partially confirm Voevodsky's conjecture that formation of $\mathbf{H}\mathbb{Q}$ is stable under base change between any quasi-excellent schemes.
\begin{corollary}\label{cor:voevodsky-conjecture}
The formation of $\mathbf{H}\mathbb{Q}$ is stable under base change between quasi-excellent schemes $S$.
\end{corollary}

\begin{proof}
Combining proposition ~\ref{prop:formation-stable-base-change} and corollary ~\ref{cor:phi-s-main-result}.
\end{proof}

\begin{proposition}\label{prop:modules-equivalent-general}
Let $S$ be a scheme. Assume that $\Phi_S$ is a stable motivic equivalence, then the category of modules over $\mathbf{H}\mathbb{Q}$ is equivalent to other rational motives over $S$.
\end{proposition}

\begin{proof}
By assumption $\mathbf{H}\mathbb{Q}$-modules is equivalent to $\mathbb{Q}[\mathbb{S}]_+$-modules, our result follows from proposition ~\ref{prop:alpha-equivalence}.
\end{proof}

\begin{corollary}\label{cor:main-result-equivalence}
Let $S$ be a quasi-excellent scheme. The category of $\mathbf{H}\mathbb{Q}$-modules is equivalent to other rational motives over $S$.
\end{corollary}

\begin{proof}
Combining proposition ~\ref{prop:modules-equivalent-general} and corollary ~\ref{cor:phi-s-main-result}.
\end{proof}

\begin{theorem}\label{thm:DM-DA-equivalence}
Let $S$ be a quasi-excellent scheme. Then Voevodsky's rational motives $\mathrm{DM}(S, \mathbb{Q})$ is equivalent to Ayoub's motives $\mathrm{DA}(S, \mathbb{Q})$ over $S$.
\end{theorem}

\begin{proof}
By corollary ~\ref{cor:main-result-equivalence}, it remains to prove that $\mathrm{DM}(S, \mathbb{Q})$ is equivalent to $\mathbf{H}\mathbb{Q}$-modules. Note that finite type scheme over a quasi-excellent scheme is also quasi-excellent, this follows from corollary ~\ref{cor:voevodsky-conjecture} and ~\cite[Proposition 11.4.7]{cisinski2019triangulated}.
\end{proof}

\bibliographystyle{plain}
\bibliography{bibliography} 

\end{document}